\numberwithin{equation}{section}
\newtheorem{theorem}{Theorem}[section]
\newtheorem{lemma}[theorem]{Lemma}
\newtheorem{proposition}[theorem]{Proposition}
\newtheorem{corollary}[theorem]{Corollary}
\theoremstyle{definition}
\newtheorem{definition}[theorem]{Definition} 
\newtheorem{procedure}[theorem]{Procedure} 
\newtheorem{remark}[theorem]{Remark}
\newtheorem{example}[theorem]{Example}
\begin{document}


\title[]{Parameterized affine codes}

\author{Hiram H. L\'opez}
\address{
Departamento de
Matem\'aticas\\
Centro de Investigaci\'on y de Estudios
Avanzados del
IPN\\
Apartado Postal
14--740 \\
07000 Mexico City, D.F.
}
\email{hlopez@math.cinvestav.mx}

\author{Eliseo Sarmiento}
\address{
Departamento de
Matem\'aticas\\
Centro de Investigaci\'on y de Estudios
Avanzados del
IPN\\
Apartado Postal
14--740 \\
07000 Mexico City, D.F.
}
\email{esarmiento@math.cinvestav.mx}

\thanks{The second author was partially supported by CONACyT. The
third author is a member of the Center for Mathematical Analysis,
Geometry and Dynamical Systems. The fourth author was partially
supported by SNI}

\author{Maria Vaz Pinto}
\address{
Departamento de Matem\'atica\\
Instituto Superior Tecnico\\
Universidade T\'ecnica de Lisboa\\ 
Avenida Rovisco Pais, 1\\ 
1049-001 Lisboa, Portugal 
}\email{vazpinto@math.ist.utl.pt}

\author{Rafael H. Villarreal}
\address{
Departamento de
Matem\'aticas\\
Centro de Investigaci\'on y de Estudios
Avanzados del
IPN\\
Apartado Postal
14--740 \\
07000 Mexico City, D.F.
}
\email{vila@math.cinvestav.mx}

\subjclass[2010]{Primary 13P25; Secondary 14G50, 14G15, 11T71, 94B27, 94B05.} 

\begin{abstract} Let $K$ be a finite field and let $X^*$ be an affine
algebraic toric set parameterized by monomials. We give
an algebraic method, using Gr\"obner bases, 
to compute the length and the dimension of
$C_{X^*}(d)$, the parameterized affine code of degree $d$ on 
the set $X^*$. If $Y$ is the projective closure of $X^*$, it is shown
that $C_{X^*}(d)$ has the same basic parameters that $C_Y(d)$, the
parameterized projective code on the set $Y$.   
If $X^*$ is an affine torus, we compute the basic
parameters of $C_{X^*}(d)$. We show how to compute the vanishing
ideals of $X^*$ and $Y$. 
\end{abstract}

\maketitle

\section{Introduction}\label{intro-affine-codes}

Let $K=\mathbb{F}_q$  be a finite field with $q$ elements and 
let $y^{v_1},\ldots,y^{v_s}$ be a finite set of monomials.  
As usual if $v_i=(v_{i1},\ldots,v_{in})\in\mathbb{N}^n$, 
then we set 
$$
y^{v_i}=y_1^{v_{i1}}\cdots y_n^{v_{in}},\ \ \ \ i=1,\ldots,s,
$$
where $y_1,\ldots,y_n$ are the indeterminates of a ring of 
polynomials with coefficients in $K$. Consider the following set
parameterized  by these monomials 
$$
X^*:=\{(x_1^{v_{11}}\cdots x_n^{v_{1n}},\ldots,x_1^{v_{s1}}\cdots
x_n^{v_{sn}})\in\mathbb{A}^{s}	\vert\, x_i\in K^*\mbox{ for all }i\},
$$
where $K^*=K\setminus\{0\}$ and $\mathbb{A}^{s}=K^s$ is an affine
space over the field $K$. We call $X^*$ an 
{\it affine algebraic toric set\/} parameterized  by 
$y^{v_1},\ldots,y^{v_s}$. The set $X^*$ is a multiplicative group under
componentwise multiplication. 

Let
$S=K[t_1,\ldots,t_s]=\oplus_{d=0}^\infty S_d$ 
be a polynomial ring 
over the field $K$ with the standard grading, let $P_1,\ldots,P_m$
be the points of $X^*$, and let $S_{\leq d}$ be the set of polynomials of $S$ of
degree at most $d$. The {\it evaluation map\/} 
\begin{equation}\label{ev-map}
{\rm ev}_d\colon S_{\leq d}\longrightarrow K^{|X^*|},\ \ \ \ \ 
f\mapsto \left(f(P_1),\ldots,f(P_m)\right),
\end{equation}
defines a linear map of
$K$-vector spaces. The image of ${\rm ev}_d$, denoted by $C_{X^*}(d)$,
defines a {\it linear code}. We call
$C_{X^*}(d)$ a {\it parameterized affine code\/} of
degree $d$ on the set $X^*$. As usual by a {\it linear code\/} we mean a linear subspace of
$K^{|X^*|}$. Parameterized affine codes are special types of
affine Reed-Muller codes in the sense of \cite[p.~37]{tsfasman}. If $s=n=1$ and $v_1=1$, then
$X^*=\mathbb{F}_q^*$ and we obtain the classical Reed-Solomon code of
degree $d$ \cite[p.~42]{stichtenoth}. 

The {\it dimension\/} and the {\it length\/} of $C_{X^*}(d)$ 
are given by $\dim_K C_{X^*}(d)$ and $|{X^*}|$ respectively. The dimension
and the length 
are two of the {\it basic parameters} of a linear code. A third
basic parameter is the {\it minimum
distance\/} which is given by 
$$\delta_{X^*}(d)=\min\{\|v\|
\colon 0\neq v\in C_{X^*}(d)\},$$ 
where $\|v\|$ is the number of non-zero
entries of $v$.  

The basic parameters of $C_{X^*}(d)$ are related by the
{\it Singleton bound\/} for the minimum distance
\begin{equation}\label{singleton-bound}
\delta_{X^*}(d)\leq |{X^*}|-\dim_KC_{X^*}(d)+1.
\end{equation}

The contents of this paper are as follows. Let $\mathbb{P}^s$ be the
projective space over the field $K$. In 
Theorem~\ref{bridge-affine-projective}, it is shown that $C_{X^*}(d)$ has
the same parameters that $C_Y(d)$, the
parameterized projective code of degree $d$ on $Y$ (see
Definition~\ref{pac}), where $Y$ is the image of $X^*$ under the map 
$\mathbb{A}^{s}\rightarrow \mathbb{P}^{s} $, $x\mapsto [(x,1)]$. It is also shown that the dimension and
the length of a parameterized affine code can be expressed in terms of
the Hilbert function and the degree of the vanishing ideal 
$I(Y)$ of $Y$. 

As an application, if $T$ is an affine torus we
compute the basic parameters of $C_{T}(d)$ (see
Corollaries~\ref{may3-11} and \ref{may3-1-11}). The basic parameters of
other types of Reed-Muller codes (or evaluation codes) over finite
fields have been computed in 
a number of cases. If $X=\mathbb{P}^{s}$,  
the parameters of $C_X(d)$ are described in
\cite[Theorem~1]{sorensen}. If $X$ is the image of $\mathbb{A}^s$ under the map 
$\mathbb{A}^{s}\rightarrow \mathbb{P}^{s} $, $x\mapsto [(x,1)]$, the parameters 
of $C_X(d)$ are described in
\cite[Theorem~2.6.2]{delsarte-goethals-macwilliams}. If $X\subset\mathbb{P}^s$ is
a set parameterized by monomials arising from the edges of a clutter
and the vanishing ideal of $X$ is a complete intersection, the parameters of $C_X(d)$ are
described in \cite{ci-codes}.   

In Theorem~\ref{22-06-10}, we show how to compute the vanishing ideal
of $X^*$. Then, we show how to compute the vanishing 
ideal of $Y$ using
Gr\"obner bases (see Lemma~\ref{elim-ord-hhomog}). We
obtain a method to compute the dimension and the length of
$C_{X^*}(d)$ using the computer algebra system {\it
Macaulay\/}$2$ \cite{mac2} (see Corollary~\ref{may4-11} and
Procedure~\ref{mac-proc-affine-codes}).  

For all unexplained
terminology and additional information  we refer to
\cite{CLO,Sta1,Stur1} 
(for the theory of Gr\"obner bases, Hilbert functions, and toric
ideals),  
\cite{MacWilliams-Sloane,stichtenoth,tsfasman} (for the theory of 
linear codes), and
\cite{gold-little-schenck,GR,GRH,GRT,renteria-tapia-ca2} for
the theory of Reed-Muller codes and evaluation codes. 

\section{Computing the length and dimension of an affine parameterized
code}\label{computing-l-d}

We continue to use the notation and definitions used in the
introduction. In this section we study parameterized affine codes and
show how to express its dimension and length in terms of the Hilbert
function and the degree of a certain standard graded algebra.  

Let $\mathbb{P}^{s}$ be a projective space over the field $K$. Consider the algebraic toric set
$$
Y:=\{[(x_1^{v_{11}}\cdots x_n^{v_{1n}},\ldots,x_1^{v_{s1}}\cdots
x_n^{v_{sn}},1)]\, \vert\, x_i\in K^*\mbox{ for all
}i\}\subset\mathbb{P}^{s},
$$
where $K^*=K\setminus\{0\}$. Notice that  $Y$ is parameterized  by
$y^{v_1},\ldots,y^{v_s},y^{v_{s+1}}$, where $v_{s+1}=0$. Also notice
that $Y$ is the projective closure of $X^*$ because $K$ is a finite field (see
Section~\ref{section-computing-ld}). The sets $X^*$ and $Y$ have the
same cardinality because the map $\rho\colon X^*\rightarrow Y$,
$x\mapsto [(x,1)]$, is bijective.  

The {\it vanishing ideal\/}
of $Y$, denoted by $I(Y)$, is the ideal  
of $S[u]$ generated by the homogeneous polynomials that 
vanish on $Y$, where $u=t_{s+1}$ is a new variable and 
$S[u]=\oplus_{d\geq 0}S[u]_d$ is a polynomial
ring, with the standard grading, 
over the field $K$. Let $Q_1,\ldots,Q_m$ be a set of 
representatives
for the points of $Y$ and let $f_0(t_1,\ldots,t_{s+1})=t_1^d$. The evaluation map
$$
{\rm ev}'_d\colon S[u]_d\longrightarrow K^{|Y|},\ \ \ \ \ 
f\mapsto
\left(\frac{f(Q_1)}{f_0(Q_1)},\ldots,\frac{f(Q_m)}{f_0(Q_m)}\right),
$$
defines a linear map of $K$-vector spaces. If $Q_1',\ldots,Q_m'$ is
another set of representatives, then there are
$\lambda_1,\ldots,\lambda_m$ in $K^*$ such that $Q_i'=\lambda_iQ_i$
for all $i$. Thus, $f(Q_i')/f_0(Q_i')=f(Q_i)/f_0(Q_i)$ for $f\in
S[u]_d$ and $1\leq i\leq m$. This means that the map ${\rm ev}'_d$ is
independent of
the set of representatives that we choose for the points of $Y$. In
what follows we choose $(P_1,1),\ldots,(P_m,1)$ as a set of
representatives for the points of $Y$. 

\begin{definition}\label{pac}
The image of ${\rm ev}'_d$, denoted by $C_Y(d)$,
defines a {\it linear code} that we call a {\it parameterized
projective code\/} of degree $d$.
\end{definition}

\begin{definition} The {\it Hilbert
function\/} of $S[u]/I(Y)$ is given by 
$$H_Y(d):=
\dim_K(S[u]_d/I({Y})\cap S[u]_d),$$
and the {\it Krull-dimension\/} of $S[u]/I(Y)$ is denoted by
$\dim(S[u]/I(Y))$.
\end{definition}

The unique polynomial $h_Y(t)=\sum_{i=0}^{k-1}c_it^i\in
\mathbb{Z}[t]$ of degree $k-1=\dim(S[u]/I(Y))-1$ such that
$h_Y(d)=H_Y(d)$ for 
$d\gg 0$ is called the {\it Hilbert polynomial\/} of $S[u]/I(Y)$, see
\cite{Sta1}. The
integer $c_{k-1}(k-1)!$, denoted by ${\rm deg}(S[u]/I(Y))$, is 
called the {\it degree\/} or  {\it multiplicity} of $S[u]/I(Y)$.

\begin{proposition}{\rm(\cite[Lecture 13]{harris},
\cite{geramita-cayley-bacharach})}\label{harris-geramita} $h_Y(d)=|Y|$ for $d\geq |Y|-1$.
\end{proposition}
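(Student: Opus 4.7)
The plan is to verify directly that the Hilbert \emph{function} satisfies $H_Y(d)=|Y|$ for every $d\geq |Y|-1$. Because the Hilbert polynomial $h_Y(t)$ is the unique polynomial agreeing with $H_Y(d)$ for $d\gg 0$, an equality $H_Y(d)=|Y|$ on an infinite range of integers forces $h_Y(t)$ to be the constant polynomial $|Y|$, and the statement $h_Y(d)=|Y|$ for all $d\geq |Y|-1$ follows immediately. This reduction is legitimate because $Y$ is a finite set of points, so $S[u]/I(Y)$ is one-dimensional and $h_Y(t)$ is indeed a constant.

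The engine that controls $H_Y(d)$ is the evaluation map $\varphi_d = {\rm ev}'_d\colon S[u]_d\to K^{|Y|}$ that has already been introduced in the excerpt. A homogeneous polynomial vanishes at a projective point if and only if it vanishes at every affine representative of that point, so $\ker(\varphi_d)=I(Y)\cap S[u]_d$. Hence $H_Y(d)=\dim_K S[u]_d/(I(Y)\cap S[u]_d)=\dim_K\mathrm{image}(\varphi_d)\leq |Y|$, and the whole proposition reduces to showing that $\varphi_d$ is \emph{surjective} whenever $d\geq |Y|-1$.

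To produce surjectivity I would use a standard interpolation construction. Fix an index $i$; for each $j\neq i$, the distinct projective points $Q_i$ and $Q_j$ of $\mathbb{P}^s$ can be separated by a linear form $\ell_{ij}\in S[u]_1$ with $\ell_{ij}(Q_j)=0$ and $\ell_{ij}(Q_i)\neq 0$, since the hyperplanes through $Q_j$ cut out a codimension-one subspace of the dual, and the further condition of containing $Q_i$ is one additional proper linear constraint. The product $g_i:=\prod_{j\neq i}\ell_{ij}$ lies in $S[u]_{|Y|-1}$, vanishes at every $Q_j$ with $j\neq i$, and is nonzero at $Q_i$. For any $d\geq |Y|-1$, the polynomial $h_i:=u^{d-(|Y|-1)}g_i$ sits in $S[u]_d$, and since the chosen representatives $Q_j=(P_j,1)$ satisfy $u(Q_j)=1$, the image $\varphi_d(h_i)$ is a nonzero scalar multiple of the standard basis vector $e_i$. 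Letting $i$ range over all points produces every $e_i$ in $\mathrm{image}(\varphi_d)$, proving surjectivity.

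The main subtlety I expect is ensuring that the linear-form separation goes through over a small finite field and that the degree bookkeeping is tight. Over any field the hyperplane-separation of two distinct projective points is always possible, so no genuine obstruction arises; and the product construction forces exactly one factor per ``other'' point, which explains why $|Y|-1$ is the correct threshold and shows that the argument is rigid in that respect. Once surjectivity is established, the identification $\deg(S[u]/I(Y))=|Y|$ is an immediate corollary, consistent with $Y$ being a zero-dimensional projective scheme of length $|Y|$.
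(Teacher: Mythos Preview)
The paper does not supply its own proof of this proposition; it is quoted as a known fact with citations to Harris and to Geramita--Kreuzer--Robbiano. Your argument is precisely the standard one underlying those references: separate distinct projective points by linear forms, take the product of $|Y|-1$ such forms to build an interpolator of degree $|Y|-1$ that picks out a single point, and pad with a power of $u$ to reach degree $d$. All steps are correct, including the observation that two distinct projective points can always be separated by a hyperplane over an arbitrary (in particular, finite) field. One cosmetic remark: once you establish $H_Y(d)=|Y|$ on an infinite set of integers, the Hilbert polynomial $h_Y$ is forced to be the constant $|Y|$, so $h_Y(d)=|Y|$ actually holds for \emph{every} $d$; the restriction $d\geq |Y|-1$ in the statement is only meaningful at the level of the Hilbert function $H_Y$, not the polynomial $h_Y$.
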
 

Recall that the {\it vanishing ideal} of ${X^*}$, denoted by $I({X^*})$, consists
of all polynomials $f$ of $S$ that vanish on the set ${X^*}$. Given
$f\in S_{\leq d}$, we set
$$
f^\mathfrak{h}(t_1,\ldots,t_s,u):=u^df(t_1/u,\ldots,t_s/u).$$  
The polynomial $f^\mathfrak{h}$ is homogeneous of degree $d$. The
polynomial $f^\mathfrak{h}$ is called the {\it homogenization} of $f$ with respect
to $u$ and $d$.

\begin{theorem}\label{bridge-affine-projective} {\rm(a)} There is
an isomorphism of $K$-vector spaces
$\varphi\colon C_{X^*}(d)\rightarrow C_Y(d)$, 
$$
(f(P_1),\ldots,f(P_m))\stackrel{\varphi}{\longmapsto}
\left(\frac{f^\mathfrak{h}(P_1,1)}{f_0(P_1,1)},\ldots,
\frac{f^\mathfrak{h}(P_m,1)}{f_0(P_m,1)}\right)=
\left(\frac{f(P_1)}{f_0(P_1)},\ldots,
\frac{f(P_m)}{f_0(P_m)}\right).
$$

{\rm(b)} The parameterized codes $C_{X^*}(d)$ and $C_Y(d)$ have the
same parameters. 

{\rm(c)} The dimension and the length of $C_{X^*}(d)$ are 
$H_Y(d)$ and ${\rm deg}(S[u]/I(Y))$ respectively.
\end{theorem}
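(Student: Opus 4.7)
The plan is to prove (a) by recognizing $\varphi$ as coordinatewise multiplication by nonzero scalars, then deduce (b) directly, and finally derive (c) from standard facts about the Hilbert polynomial of a zero-dimensional projective scheme.

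For (a), I would first record that homogenization $f\mapsto f^{\mathfrak h}$ is a $K$-linear bijection $S_{\leq d}\to S[u]_d$ with inverse $g\mapsto g(t_1,\ldots,t_s,1)$. The equality of the two right-hand sides in the statement of $\varphi$ is then automatic because $f^{\mathfrak h}(P_i,1)=1^d f(P_i)=f(P_i)$. The crucial observation is that $f_0(P_i,1)$ is the $d$-th power of the first coordinate of $P_i$, and every coordinate of any point of $X^*$ is a product of elements of $K^*$; hence $f_0(P_i,1)\in K^*$ and $\varphi$ is simply coordinatewise division by the nonzero scalars $f_0(P_1,1),\ldots,f_0(P_m,1)$. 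Well-definedness, $K$-linearity, and injectivity follow at once, while surjectivity follows because any $g\in S[u]_d$ equals $f^{\mathfrak h}$ for $f(t_1,\ldots,t_s):=g(t_1,\ldots,t_s,1)$, by the homogeneity of $g$.

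Part (b) is then immediate: the lengths agree because $\rho\colon X^*\to Y$, $x\mapsto[(x,1)]$, is a bijection; the dimensions agree because $\varphi$ is a $K$-linear isomorphism; and the minimum distances agree because coordinatewise multiplication by elements of $K^*$ preserves Hamming weight.

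For (c), using (b) it suffices to compute $\dim_K C_Y(d)$ and $|Y|$. The dimension equals $H_Y(d)$ by the first isomorphism theorem applied to ${\rm ev}'_d$, once I verify that $\ker({\rm ev}'_d)=I(Y)\cap S[u]_d$: the nontrivial direction uses that if $g\in S[u]_d$ vanishes on each $(P_i,1)$, then by homogeneity $g$ vanishes on $\lambda(P_i,1)$ for every $\lambda\in K^*$, hence on every representative of every point of $Y$. For the length, since $Y$ is a finite set of projective points the Krull dimension of $S[u]/I(Y)$ is one, so the Hilbert polynomial $h_Y$ is the constant $\deg(S[u]/I(Y))$; Proposition~\ref{harris-geramita} then identifies this constant with $|Y|$, which equals $|X^*|$ by the bijection $\rho$. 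The only real subtlety is the kernel computation, where homogeneity is what lets us pass from the single-representative evaluation that defines ${\rm ev}'_d$ to the representative-independent definition of $I(Y)$.
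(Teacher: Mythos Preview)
Your proof is correct. The overall structure matches the paper's, but your argument for (a) takes a slightly different and more direct route: the paper establishes the isomorphism by passing through the quotients $S_{\leq d}/I(X^*)_{\leq d}\simeq C_{X^*}(d)$ and $S[u]_d/I(Y)_d\simeq C_Y(d)$ and then checking that the homogenization map $\psi\colon S_{\leq d}\to S[u]_d$ carries $I(X^*)_{\leq d}$ onto $I(Y)_d$, whereas you bypass the vanishing ideals entirely by recognizing $\varphi$ as the restriction to $C_{X^*}(d)$ of the diagonal automorphism of $K^m$ that divides coordinatewise by the nonzero scalars $f_0(P_i,1)$, using the surjectivity of homogenization only to land onto $C_Y(d)$. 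Your version is a bit more elementary; the paper's version has the side benefit of recording the identity $\psi(I(X^*)_{\leq d})=I(Y)_d$ explicitly. Parts (b) and (c) are argued in the same way in both proofs, including the kernel computation $\ker(\mathrm{ev}'_d)=I(Y)\cap S[u]_d$ and the appeal to Proposition~\ref{harris-geramita} for the degree.
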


\begin{proof} (a) We set $I({X^*})_{\leq d}=I({X^*})\cap S_{\leq d}$.
The kernel of ${\rm ev}_d$ is precisely $I({X^*})_{\leq 
d}$. Hence, there is an isomorphism of $K$-vector spaces
\begin{equation}\label{isom-rs-1}
S_{\leq d}/I({X^*})_{\leq d}\simeq
C_d({X^*})=\{\left(f(P_1),\ldots,f(P_m)\right)\vert\, f\in S_{\leq
d}\}.
\end{equation}

 The kernel of ${\rm ev}'_d$ is the homogeneous part
$I(Y)_d$ of degree $d$ of $I(Y)$. Notice that $I(Y)_d$ is equal to 
$I({Y})\cap S[u]_d$. Therefore, there is an isomorphism of $K$-vector spaces
\begin{equation}\label{isom-rs-2}
S[u]_d/I(Y)_d\simeq C_Y(d).
\end{equation}

The homogenization map $\psi\colon S_{\leq d}{\rightarrow}S[u]_d$, $f\mapsto
f^\mathfrak{h}$,  is an isomorphism of $K$-vector spaces (see
\cite[p.~330]{singular}) such that
$\psi(I(X^*)_{\leq d})=I(Y)_d$. Hence, the induced map
\begin{equation}\label{may5-11}
\Phi\colon S_{\leq d}\rightarrow S[u]_d/I(Y)_d,\ \ \ \ \ f\longmapsto
f^\mathfrak{h}+I(Y)_d,
\end{equation}
is a surjection. Thus, by Eqs.~(\ref{isom-rs-1}) and
(\ref{isom-rs-2}), it suffices to
observe that ${\rm ker}(\Phi)=I(X^*)_{\leq d}$.

(b) From part (a) it is clear that $C_{X^*}(d)$ and $C_Y(d)$ have the
same dimension and length. To show that they have the same minimum
distance it suffices to notice that the isomorphism $\varphi$ between
$C_{X^*}(d)$ and $C_Y(d)$ preserves the norm, i.e.,
$\|v\|=\|\varphi(v)\|$ for $v\in C_{X^*}(d)$. 

(c) The ring $S[u]/I(Y)$ has Krull-dimension $1$ (see
\cite[Theorem~2.1(c), p.~85]{algcodes}), thus its Hilbert polynomial
$h_Y(t)=c_0$ is a non-zero constant and its degree is equal to $c_0$.
Then, by Proposition~\ref{harris-geramita}, we get
$$
|Y|=h_Y(d)=c_0=\deg(S[u]/I(Y))
$$
for $d\geq |Y|-1$. Thus, $|Y|$ is 
the degree of $S[u]/I(Y)$. Hence, from part (b), we get that the
length of $C_{X^*}(d)$ is equal to the degree of $S[u]/I(Y)$ and the
dimension of $C_{X^*}(d)$ is equal to $H_Y(d)$. 
\end{proof}

From this result it follows at once that the codes $C_{X^*}(d)$ and
$C_Y(d)$ are equivalent in the sense of \cite[p.~48]{stichtenoth}.

\begin{remark} If $H_{X^*}(d)$ is the {\it affine Hilbert function\/} 
of the affine $K$-algebra $S/I(X^*)$, given by
$$H_{X^*}(d):=\dim_K\, S_{\leq d}/I(X^*)_{\leq d},$$ 
then, by Eq.~(\ref{may5-11}), $H_Y(d)=H_{X^*}(d)$ for $d\geq 1$ (see
\cite[Remark~5.3.16]{singular}).
\end{remark}

\begin{corollary} {\rm(a)} The dimension of $C_{X^*}(d)$ is increasing, as a function of 
$d$, until it reaches a constant value equal to $|X^*|$. {\rm(b)} The minimum distance of
$C_{X^*}(d)$ is decreasing, as a function of $d$, until
it reaches a constant value equal to $1$.
\end{corollary}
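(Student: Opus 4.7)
The plan is to base both parts on Theorem~\ref{bridge-affine-projective}(c), which identifies $\dim_K C_{X^*}(d)$ with $H_Y(d)$ and $|X^*|$ with $\deg(S[u]/I(Y))$, together with Proposition~\ref{harris-geramita}, which gives $H_Y(d)=|X^*|$ for $d\geq|X^*|-1$.

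For part (a), I would first establish that $H_Y(d)$ is non-decreasing in $d$. The key observation is that $u=t_{s+1}$ is a nonzerodivisor on $S[u]/I(Y)$: every point of $Y$ admits the representative $(P_i,1)$, so $u$ does not vanish on $Y$, and the radicality of the vanishing ideal of a finite set yields $(I(Y):u)=I(Y)$. Consequently, multiplication by $u$ induces an injection $(S[u]/I(Y))_d\hookrightarrow(S[u]/I(Y))_{d+1}$, so $H_Y(d)\leq H_Y(d+1)$.

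Next I would show that as soon as $H_Y(d)=H_Y(d+1)$ for some $d$, the Hilbert function is constant from that degree onward. Setting $R:=S[u]/(I(Y)+(u))$, the short exact sequence
\[
0\longrightarrow(S[u]/I(Y))_d\xrightarrow{\,\cdot u\,}(S[u]/I(Y))_{d+1}\longrightarrow R_{d+1}\longrightarrow 0
\]
forces $R_{d+1}=0$ under the hypothesis, and since $R$ is a standard graded algebra generated in degree $1$, this yields $R_e=0$ for all $e\geq d+1$. Iterating the exact sequence gives $H_Y(e)=H_Y(d)$ for all $e\geq d$. Combining with Proposition~\ref{harris-geramita}, this constant value must be $|X^*|$, proving that $\dim_K C_{X^*}(d)$ strictly increases until it reaches $|X^*|$.

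For part (b), the containment $S_{\leq d}\subseteq S_{\leq d+1}$ directly yields $C_{X^*}(d)\subseteq C_{X^*}(d+1)$ as subspaces of $K^{|X^*|}$, so every nonzero codeword of minimum weight in $C_{X^*}(d)$ remains a nonzero codeword of $C_{X^*}(d+1)$; hence $\delta_{X^*}(d+1)\leq\delta_{X^*}(d)$. Once $d$ is large enough that part (a) gives $\dim_K C_{X^*}(d)=|X^*|$, the code fills $K^{|X^*|}$ and therefore contains a standard basis vector of weight $1$, so $\delta_{X^*}(d)=1$ and the value stabilizes. The only delicate step in the whole proof is the ``no-plateau-below-maximum'' claim of part (a), which is precisely what the nonzerodivisor property of $u$ delivers.
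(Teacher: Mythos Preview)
Your argument is correct. Both parts are sound: for (a), the key point that $u=t_{s+1}$ is a nonzerodivisor on $S[u]/I(Y)$ is justified exactly as you say (every point of $Y$ has last coordinate $1$, and $I(Y)$ is radical), and the short exact sequence then yields both monotonicity and the ``no plateau below maximum'' property; for (b), the containment $C_{X^*}(d)\subseteq C_{X^*}(d+1)$ together with the eventual equality $C_{X^*}(d)=K^{|X^*|}$ does the job.

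The paper proceeds differently: it simply invokes Theorem~\ref{bridge-affine-projective}(b) to transfer the question to the projective code $C_Y(d)$, and then cites the literature (Geramita--Kreuzer--Robbiano and Duursma--Renter\'\i a--Tapia-Recillas for the Hilbert function behavior, Renter\'\i a--Simis--Villarreal and Toh\v{a}neanu for the minimum distance) for the corresponding facts about $C_Y(d)$. Your proof is essentially a self-contained reproof of those cited facts in this specific setting: the nonzerodivisor argument for $u$ is precisely the mechanism underlying the cited Hilbert-function results, and for (b) you bypass the projective code altogether by working directly with the affine containment. The paper's route is shorter on the page but opaque without the references; yours explains why the statements hold and requires nothing beyond what is already in the paper.
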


\begin{proof} The dimension of $C_Y(d)$ is increasing, as a function of 
$d$, until it reaches a constant value equal to $|Y|$ 
(see \cite[Remark~1.1, p.~166]{geramita-cayley-bacharach} or
\cite[p.~456]{duursma-renteria-tapia}). The minimum distance of $C_Y(d)$ is decreasing, as a 
function of $d$, until it reaches a constant value equal to $1$. This was shown in
\cite[Proposition~5.1, p.~99]{algcodes} and
\cite[Proposition~2.1]{tohaneanu}. 
Therefore the result follows from
Theorem~\ref{bridge-affine-projective}.  
\end{proof}

Next, we give an application by computing the basic parameters of a
certain family of parameterized affine codes. Let $X^*$ be an affine
algebraic toric set parameterized by $y_1,\ldots,y_s$. In this case we
denote $X^*$ by $T$ and $Y$ by $\mathbb{T}$. We call $T$ (resp.
$\mathbb{T}$) an {\it affine\/} (resp. {\it projective\/}) {\it
torus\/}. Recall that $T$ and $\mathbb{T}$ are given by  
$$
T=\left\{\left(x_1,\ldots,x_s\right)\mid x_i\in
K^\ast\right\}\subset\mathbb{A}^s\ \mbox{ and }\ 
\mathbb{T}=\left\{\left[(x_1,\ldots,x_s,1\right)]\mid x_i\in
K^\ast\right\}\subset\mathbb{P}^s,
$$
respectively.

\begin{corollary}\label{may3-11} The minimum distance of
$C_T(d)$ is given by
$$
\delta_T(d)=\left\{\begin{array}{cll}
(q-1)^{s-k-1}(q-1-\ell)&\mbox{if}&d\leq (q-2)s-1,\\
1&\mbox{if}&d\geq (q-2)s,
\end{array}
 \right.
$$
where $k$ and $\ell$ are the unique integers such that $k\geq 0$,
$1\leq \ell\leq q-2$ and $d=k(q-2)+\ell$. 
\end{corollary}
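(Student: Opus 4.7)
The plan is to reduce the problem to the corresponding statement for the projective torus $\mathbb{T}$. By Theorem~\ref{bridge-affine-projective}(b), the codes $C_T(d)$ and $C_{\mathbb{T}}(d)$ share the same basic parameters, so in particular $\delta_T(d)=\delta_{\mathbb{T}}(d)$. It therefore suffices to establish the stated formula for $\delta_{\mathbb{T}}(d)$. A first preparatory step is to describe $I(\mathbb{T})$ explicitly: since every $x\in K^\ast$ satisfies $x^{q-1}=1$, the binomials $t_i^{q-1}-u^{q-1}$ lie in $I(\mathbb{T})$ for all $i$, and a standard Gr\"obner basis argument shows that they generate the ideal. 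Consequently, every class in $S[u]_d/I(\mathbb{T})_d$ has a representative $F$ of degree at most $q-2$ in each $t_i$, and the weight of $\mathrm{ev}'_d(F)$ equals $(q-1)^s-|V_{\mathbb{T}}(F)|$, where $V_{\mathbb{T}}(F)$ is the set of points of $\mathbb{T}$ annihilated by $F$.

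Write $d=k(q-2)+\ell$ with $0\le k$ and $1\le \ell\le q-2$. The case $d\ge (q-2)s$ is handled directly: in this range the admissible monomials $u^{b}\prod t_i^{a_i}$ with $0\le a_i\le q-2$ exhaust a basis of the function space on $\mathbb{T}$, so $C_{\mathbb{T}}(d)=K^{(q-1)^s}$ and $\delta_{\mathbb{T}}(d)=1$. For $d\le (q-2)s-1$, I would produce matching upper and lower bounds. The upper bound $\delta_{\mathbb{T}}(d)\le (q-1)^{s-k-1}(q-1-\ell)$ is realized by the explicit polynomial
\[
F=\Bigl(\prod_{i=1}^{k}\prod_{\beta\in K^\ast\setminus\{1\}}(t_i-\beta u)\Bigr)\prod_{j=1}^{\ell}(t_{k+1}-\gamma_j u),
\]
for distinct $\gamma_1,\dots,\gamma_\ell\in K^\ast$, which has total degree $k(q-2)+\ell=d$ and whose evaluation is nonzero on $\mathbb{T}$ exactly at those $(x_1,\ldots,x_s,1)$ with $x_1=\cdots=x_k=1$ and $x_{k+1}\notin\{\gamma_1,\ldots,\gamma_\ell\}$, giving precisely $(q-1)^{s-k-1}(q-1-\ell)$ nonzero coordinates.

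The main obstacle is the matching lower bound: every nonzero class in $S[u]_d/I(\mathbb{T})_d$ has at least $(q-1)^{s-k-1}(q-1-\ell)$ nonzero evaluations. I would prove this by induction on $s$ via a Schwartz--Zippel style argument adapted to the torus. After reducing $F$ modulo the $t_i^{q-1}-u^{q-1}$, one selects a variable $t_j$ of positive $F$-degree and expands $F=\sum_{a}G_{a}(t_{\neq j},u)\,t_j^{a}$; applying the inductive bound to the nonzero slices $G_a$ on the $(s-1)$-dimensional torus while carefully tracking how the decomposition $d=k(q-2)+\ell$ degrades to each slice yields the required inequality. Once this projective-torus minimum distance formula is in hand, Theorem~\ref{bridge-affine-projective}(b) delivers Corollary~\ref{may3-11} immediately.
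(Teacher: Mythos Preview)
Your reduction to the projective torus via Theorem~\ref{bridge-affine-projective}(b) is exactly the paper's argument. The paper's proof stops there: it simply invokes \cite{ci-codes}, where the formula for $\delta_{\mathbb{T}}(d)$ is established, and concludes. You instead attempt to reprove that cited result from scratch.

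Your upper-bound construction is correct and is essentially the standard witness polynomial. Your handling of the range $d\ge (q-2)s$ is also fine. The substantive content, however, is the lower bound, and here your proposal is only a sketch: the phrase ``carefully tracking how the decomposition $d=k(q-2)+\ell$ degrades to each slice'' hides the actual work. A naive Schwartz--Zippel recursion on the torus bounds the number of zeros of a polynomial of $t_j$-degree $a$ by $a(q-1)^{s-1}$ plus the zero count of a slice, and iterating this gives an inequality of the form $|V_{\mathbb{T}}(F)|\le d(q-1)^{s-1}$, which is too weak to yield $(q-1)^{s-k-1}(q-1-\ell)$. The sharper bound requires a more delicate induction that exploits the constraint $\deg_{t_i}(F)\le q-2$ for every variable simultaneously; this is precisely the nontrivial argument carried out in \cite{ci-codes}. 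So your outline is pointed in the right direction, but the inductive step as stated does not yet close.

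In short: for the purposes of this corollary your proof is complete once you cite \cite{ci-codes} for $\delta_{\mathbb{T}}(d)$, which is what the paper does. Your additional self-contained derivation is a worthwhile exercise, but the lower-bound portion would need to be fleshed out substantially before it stands on its own.
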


\begin{proof} It was shown in \cite{ci-codes} that the minimum distance of
$C_\mathbb{T}(d)$ is given by the formula above. Thus, 
by Theorem~\ref{bridge-affine-projective}, the result follows.
\end{proof}

A linear code is called
{\it maximum distance  separable\/} (MDS for short) if equality holds
in the Singleton bound (see Eq.~(\ref{singleton-bound})). As a 
consequence of this result we obtain the well-known formula
for the minimum distance of a Reed-Solomon code
\cite[p.~42]{stichtenoth}. 

\begin{corollary}{\rm(Reed-Solomon codes)} Let $T$ be an affine torus in $\mathbb{A}^1$. 
Then the minimum distance $\delta_{T}(d)$ of $C_{{T}}(d)$ is given by
$$
\delta_{T}(d)=\left\{\hspace{-1mm}\begin{array}{cll}
q-1-d&\mbox{if}&1\leq d\leq q-3,\\
1&\mbox{if}&d\geq q-2,
\end{array}
 \right.
$$
and $C_{T}(d)$ is an MDS code.
\end{corollary}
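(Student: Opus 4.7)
The plan is to obtain this as a direct specialization of Corollary~\ref{may3-11} with $s=1$, and then verify the MDS property by computing the dimension separately. Since the statement concerns the affine torus $T \subset \mathbb{A}^1$, we have $|T|=q-1$, and every element of $C_T(d)$ is of the form $(f(\alpha))_{\alpha \in K^*}$ for some $f \in K[t]$ with $\deg f \leq d$.

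First, I apply Corollary~\ref{may3-11} with $s=1$. Writing $d = k(q-2) + \ell$ with $k \geq 0$ and $1 \leq \ell \leq q-2$: if $1 \leq d \leq q-3$, then necessarily $k=0$ and $\ell = d$, so
\[
\delta_T(d) = (q-1)^{s-k-1}(q-1-\ell) = (q-1)^{0}(q-1-d) = q-1-d,
\]
while if $d \geq (q-2)s = q-2$ the corollary immediately gives $\delta_T(d) = 1$. This establishes the displayed formula.

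Next, to prove that $C_T(d)$ is MDS, I compute $\dim_K C_T(d)$. The kernel of ${\rm ev}_d$ consists of those $f \in S_{\leq d}$ vanishing on all of $K^*$, and since a univariate polynomial of degree at most $d \leq q-2$ with $q-1$ distinct roots must be zero, the evaluation map is injective on $S_{\leq d}$ for $0 \leq d \leq q-2$. Hence $\dim_K C_T(d) = d+1$ for $0 \leq d \leq q-2$, and $\dim_K C_T(d) = q-1 = |T|$ for $d \geq q-2$ (using that $t^{q-1}-1$ vanishes on $K^*$).

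Finally, I compare with the Singleton bound~(\ref{singleton-bound}). For $1 \leq d \leq q-3$,
\[
|T| - \dim_K C_T(d) + 1 = (q-1) - (d+1) + 1 = q-1-d = \delta_T(d),
\]
while for $d \geq q-2$ we have $\dim_K C_T(d) = q-1 = |T|$, giving $|T| - \dim_K C_T(d) + 1 = 1 = \delta_T(d)$. Thus equality holds in the Singleton bound in every case, so $C_T(d)$ is MDS. No step presents a real obstacle; the only mild subtlety is bookkeeping the uniqueness of the decomposition $d = k(q-2)+\ell$ when $s=1$, which forces $k=0$ in the relevant range.
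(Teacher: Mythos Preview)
Your derivation of the minimum distance is exactly the paper's proof: specialize Corollary~\ref{may3-11} to $s=1$, observe that $1\le d\le q-3$ forces $k=0$, $\ell=d$, and read off $\delta_T(d)$.

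The only difference is that the paper's proof stops there and does not explicitly verify the MDS claim, whereas you do, via a direct elementary argument (injectivity of ${\rm ev}_d$ for $d\le q-2$ because a nonzero univariate polynomial of degree $\le q-2$ cannot have $q-1$ roots). This is correct and arguably cleaner than invoking the general dimension formula of Corollary~\ref{may3-1-11}, which is the natural alternative within the paper's framework; either route gives $\dim_K C_T(d)=d+1$ for $d\le q-2$ and $q-1$ thereafter, and your Singleton-bound check then finishes the MDS assertion.
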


\begin{proof} In this situation $s=1$. If $d\leq q-3$, we can write
$d=k(q-2)+\ell$, where $k=0$ and $\ell=d$. Then, by
Corollary~\ref{may3-11}, we get $\delta_T(d)=q-1-d$ for $d\leq q-3$
and $\delta_T(d)=1$ for $d\geq q-2$.
\end{proof}

\begin{corollary}\label{may3-1-11} The length of $C_T(d)$ is $(q-1)^{s}$ and its dimension is 
$$
\dim_KC_T(d)=\sum_{j=0}^{\left\lfloor\frac{d}{q-1}\right\rfloor}(-1)^j{s\choose
j}{s+d-j(q-1)\choose s}.
$$
\end{corollary}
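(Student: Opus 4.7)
The length is immediate: since $T=(K^*)^s$, we have $|T|=(q-1)^s$. For the dimension, my plan is to invoke Theorem~\ref{bridge-affine-projective}(c) to translate the problem into computing the Hilbert function $H_\mathbb{T}(d)$ of $S[u]/I(\mathbb{T})$.

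The crux is then to identify $I(\mathbb{T})$ with the complete intersection
$$J=(t_1^{q-1}-u^{q-1},\ldots,t_s^{q-1}-u^{q-1}).$$
The inclusion $J\subseteq I(\mathbb{T})$ follows at once from $a^{q-1}=1$ for every $a\in K^*$. For the reverse inclusion I would argue that $J$ is a complete intersection of $s$ forms of degree $q-1$ in the $(s+1)$-variable ring $S[u]$, so $S[u]/J$ is Cohen--Macaulay of Krull dimension one and degree $(q-1)^s$; meanwhile, checking that $V_{\bar K}(J)=\mathbb{T}$ (any $[a_1,\ldots,a_s,b]\in V_{\bar K}(J)$ must have $b\neq 0$, for otherwise every $a_i=0$, and then each $a_i/b$ is a $(q-1)$-th root of unity in $\bar K$, hence already in $K^*$) and applying Proposition~\ref{harris-geramita} gives $\deg(S[u]/I(\mathbb{T}))=|\mathbb{T}|=(q-1)^s$. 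The surjection $S[u]/J\twoheadrightarrow S[u]/I(\mathbb{T})$ of one-dimensional graded algebras of equal degree, combined with the unmixedness of $S[u]/J$, then forces $J=I(\mathbb{T})$.

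Once $I(\mathbb{T})$ is known to be this complete intersection, the Hilbert series takes the standard form
$$\sum_{d\geq 0} H_\mathbb{T}(d)\,z^d=\frac{(1-z^{q-1})^s}{(1-z)^{s+1}},$$
and expanding $(1-z^{q-1})^s=\sum_{j=0}^s (-1)^j\binom{s}{j}z^{j(q-1)}$ together with $(1-z)^{-(s+1)}=\sum_{k\geq 0}\binom{s+k}{s}z^k$ and reading off the coefficient of $z^d$ (only the indices $j$ with $j(q-1)\leq d$ contribute) produces the stated alternating-sum formula.

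The main obstacle is the identification of $I(\mathbb{T})$ as the complete intersection $J$; although well-known for the projective torus over a finite field, the cleanest route is the degree-plus-unmixedness argument sketched above. Once that is in place the Hilbert-series expansion is entirely routine.
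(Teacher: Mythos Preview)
Your argument is correct, but it takes a different route from the paper. The paper simply observes that the dimension formula for $C_{\mathbb{T}}(d)$ was already established in \cite{duursma-renteria-tapia} and then transfers it to $C_T(d)$ via Theorem~\ref{bridge-affine-projective}; no computation of $I(\mathbb{T})$ or of a Hilbert series is carried out here. Your approach instead supplies a self-contained proof: you identify $I(\mathbb{T})$ with the complete intersection $J=(t_1^{q-1}-u^{q-1},\ldots,t_s^{q-1}-u^{q-1})$ via the degree-plus-unmixedness argument, and then read off $H_{\mathbb{T}}(d)$ from the standard Hilbert series of a complete intersection. This buys independence from the cited reference and makes the combinatorics of the formula transparent, at the cost of invoking a bit of Cohen--Macaulay machinery that the paper avoids. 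One minor remark: the verification that $V_{\bar K}(J)=\mathbb{T}$ is not actually needed for your argument, since $\deg(S[u]/I(\mathbb{T}))=|\mathbb{T}|$ already follows from Proposition~\ref{harris-geramita} (as in the proof of Theorem~\ref{bridge-affine-projective}(c)), and $|\mathbb{T}|=(q-1)^s$ is clear from the bijection with $T$.
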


\begin{proof} The length of $C_T(d)$ is clearly equal to $(q-1)^{s}$
because $T=(K^*)^s$. It was shown in \cite{duursma-renteria-tapia} that the
dimension of $C_{\mathbb{T}}(d)$ is given by the formula above. 
Thus, by Theorem~\ref{bridge-affine-projective}, the result follows.
\end{proof}

\begin{example}\label{large-minimum-distance-1} 
Let $T$ be an affine torus in $\mathbb{A}^2$ and let $C_T(d)$ be its 
parameterized affine code of degree $d$ over the field
$K=\mathbb{F}_{11}$. Using Corollaries~\ref{may3-11} and
\ref{may3-1-11}, we obtain:

\begin{eqnarray*}
&&\left.
\begin{array}{c|c|c|c|c|c|c|c|c|c|c|c|c|c}
 d & 1 & 2 & 3 & 4 & 5 & 6 & 7 & 8 & 9 & 10 & 11 & 12 & 13  \\
   \hline
 |T| & 100 & 100 & 100 & 100 & 100 & 100 & 100 & 100 & 100 & 100 & 100 & 100 & 100 \\
   \hline
 \dim C_T(d)    \    & 3 & 6   & 10 & 15 & 21 & 28 & 36 & 45 & 55 & 64 & 72 & 79 & 85  \\
   \hline
 \delta_{T}(d) & 90 & 80 & 70 & 60 & 50 & 40 & 30 & 20 & 10 & 9  & 8  & 7  & 6   \\
\end{array}
\right.
\end{eqnarray*}
\end{example}

\section{Computing the dimension and length of
$C_{X^*}(d)$}\label{section-computing-ld}

We continue to use the notation and definitions used in
Sections~\ref{intro-affine-codes} and \ref{computing-l-d}. 
In this section we give expressions for $I(X^*)$ and $I(Y)$---valid over any
finite field $K$ with $q$ elements---that allow to compute some of the
basic parameters of a parameterized affine code using Gr\"obner bases. 

\begin{theorem}{\rm(Combinatorial Nullstellensatz
\cite[Theorem~1.2]{alon-cn})}
\label{comb-null} Let $R=K[y_1,\ldots,y_n]$ be a
polynomial ring over a field $K$, let $f\in R$, and let
$a=(a_i)\in\mathbb{N}^n$. Suppose that the coefficient of
$y^a$ in $f$ is non-zero and $\deg\left(f\right)=a_1+\cdots+a_n$. If
$S_{1},\ldots ,S_{n}$ are subsets of $K$, with $\left|S_{i}\right| > a_i$ for
all $i$, then there are $s_{1}\in S_{1},\ldots,s_n\in S_n$ such that
$f\left(s_{1},\ldots ,s_{n}\right) \neq 0$.  
\end{theorem}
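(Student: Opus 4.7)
The plan is to argue by contradiction using the standard reduction trick: assume $f$ vanishes identically on $S_1\times\cdots\times S_n$, reduce $f$ modulo the separable polynomials $g_i(y_i):=\prod_{s\in S_i}(y_i-s)$, and show that the coefficient of $y^a$ survives this reduction. The resulting reduced polynomial will have $y_i$-degree less than $|S_i|$ in each variable and vanish on $S_1\times\cdots\times S_n$, and a short inductive argument will force it to be identically zero, producing the contradiction.

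More concretely, for each $i$ let $t_i:=|S_i|$ and write $g_i(y_i)=y_i^{t_i}-h_i(y_i)$ with $\deg h_i<t_i$. Then in the quotient $K[y_1,\dots,y_n]/(g_1,\dots,g_n)$ every monomial $y^b$ can be rewritten as a $K$-linear combination of monomials $y^c$ with $c_i<t_i$ for all $i$, by iteratively replacing each factor $y_i^{t_i}$ by $h_i(y_i)$. Each such replacement strictly decreases the total degree (it trades a factor $y_i^{t_i}$ for a polynomial in $y_i$ of degree at most $t_i-1$), so the procedure terminates in a unique normal form $\tilde f$. Crucially, reducing $f$ does not change any value of $f$ on $S_1\times\cdots\times S_n$, since each $g_i$ vanishes there.

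The heart of the argument is the claim that the coefficient of $y^a$ in $\tilde f$ equals its coefficient in $f$. Since $a_i<t_i=|S_i|$ for every $i$, the monomial $y^a$ is already in normal form and is untouched by any reduction. For any other monomial $y^b$ appearing in $f$ we have $\deg y^b=b_1+\cdots+b_n\leq \deg f=a_1+\cdots+a_n$. If some $b_i\geq t_i$, then after reduction every term coming from $y^b$ has total degree strictly less than $b_1+\cdots+b_n\leq a_1+\cdots+a_n$, so none of them can equal $y^a$. If on the other hand $b_i<t_i$ for all $i$, then $y^b$ is already in normal form and contributes only to its own coefficient, not to $y^a$ (as $b\ne a$). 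Hence the coefficient of $y^a$ in $\tilde f$ is nonzero.

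Finally I would verify by induction on $n$ that any polynomial $P\in K[y_1,\dots,y_n]$ with $\deg_{y_i}P<|S_i|$ for all $i$ that vanishes on $S_1\times\cdots\times S_n$ must be identically zero: writing $P=\sum_{j<t_n} P_j(y_1,\dots,y_{n-1})y_n^j$, for each fixed $(s_1,\dots,s_{n-1})\in S_1\times\cdots\times S_{n-1}$ the one-variable polynomial $P(s_1,\dots,s_{n-1},y_n)$ has degree less than $|S_n|$ and vanishes on $S_n$, hence is zero, so each $P_j$ vanishes on $S_1\times\cdots\times S_{n-1}$, and the inductive hypothesis finishes the job. Applied to $\tilde f$, this forces $\tilde f=0$, contradicting the previous paragraph. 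The main obstacle, and the step I would be most careful about, is the degree bookkeeping in the third paragraph: the hypothesis $\deg f=\sum a_i$ is used precisely to rule out that reductions of higher total-degree monomials from $f$ accidentally recreate the monomial $y^a$.
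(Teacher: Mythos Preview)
Your proof is correct and is essentially Alon's original argument: reduce $f$ modulo the vanishing polynomials $g_i(y_i)=\prod_{s\in S_i}(y_i-s)$, observe that the top-degree hypothesis $\deg f=\sum_i a_i$ together with $a_i<|S_i|$ guarantees the coefficient of $y^a$ survives the reduction, and then use the easy inductive vanishing lemma to force the reduced polynomial to be zero. The degree bookkeeping you flag is handled correctly.

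There is nothing to compare against, however: the paper does not prove this theorem. It is quoted verbatim from \cite{alon-cn} as an auxiliary tool, with no proof given, and is then applied to establish Lemma~\ref{may1-2-11}. So your write-up supplies a proof where the paper simply imports one by citation.
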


\begin{lemma}\label{may1-2-11} Let $K=\mathbb{F}_{q}$ and let
$G$ be a polynomial in $K[ y_{1},\ldots,y_{n}] $. If $G$
vanishes on $\left( K^{*}\right)^{n}$ and $\deg_{y_{i}}\left(G\right)
< q-1$ for $i=1,\ldots ,n$, then $G=0$.
\end{lemma}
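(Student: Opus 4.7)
The plan is to prove this by contradiction using the Combinatorial Nullstellensatz (Theorem~\ref{comb-null}) stated just above. Suppose $G \neq 0$. Then $G$ has at least one monomial with a non-zero coefficient, so I can choose a monomial $y^a = y_1^{a_1}\cdots y_n^{a_n}$ with non-zero coefficient in $G$ such that $a_1+\cdots+a_n = \deg(G)$; any monomial of maximal total degree works.

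Next I would check the hypotheses of Theorem~\ref{comb-null} with $S_i = K^*$ for every $i$. Since $a_i \leq \deg_{y_i}(G) < q-1$ by hypothesis, we have $|S_i| = q-1 > a_i$ for each $i$. The Combinatorial Nullstellensatz then produces elements $s_1 \in K^*, \ldots, s_n \in K^*$ with $G(s_1,\ldots,s_n) \neq 0$, contradicting the hypothesis that $G$ vanishes on $(K^*)^n$. Hence $G = 0$.

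There is no real obstacle here: the whole argument is a direct one-line application of Theorem~\ref{comb-null}, once one observes that a top-degree monomial of $G$ automatically satisfies the partial-degree bounds $a_i < q-1$, so that the sets $S_i = K^*$ are large enough.
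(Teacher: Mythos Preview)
Your proof is correct and is essentially identical to the paper's own argument: assume $G\neq 0$, pick a top-degree monomial $y^a$, set $S_i=K^*$, use $a_i\leq\deg_{y_i}(G)<q-1=|S_i|$, and invoke the Combinatorial Nullstellensatz to reach a contradiction.
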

\begin{proof} We proceed by contradiction. Assume that $G$ is
non-zero. Then, there is a monomial $y^a$ that occurs in $G$ with 
$\deg(G)=a_1+\cdots+a_n$, where $a=(a_1,\ldots,a_n)$ and $a_i>0$ for
some $i$. We set $S_{i}=K^{*}$ for all $i$. As $\deg_{y_{i}}(G) <
q-1$ for all $i$, then $a_i<\left|S_{i}\right| =q-1$ for all $i$. Thus, by 
Lemma~ \ref{comb-null}, there are $x_{1},\ldots,x_{n}\in K^{*}$ so that 
$G\left( x_{1},\ldots ,x_{n}\right) \neq 0$, a contradiction to the
fact that $G$ vanishes on $\left( K^{\ast }\right) ^{n}$. 
\end{proof}

A polynomial of the form $t^a-t^b$,
with $a,b\in\mathbb{N}^s$, is called a {\it binomial}  
of $S$. An ideal generated 
by binomials is called a {\it binomial ideal\/}. 

\begin{lemma}\label{may1-1-11}
 Let $B=K[ t_{1},\ldots ,t_{s},y_{1},\ldots
 ,y_{n}] $ be a polynomial ring over an arbitrary field $K$. If
 $I'$ is a binomial ideal of $B$, then $I'\cap K[t_1,\ldots,t_s]$ is a
 binomial ideal.
\end{lemma}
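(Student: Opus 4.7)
The plan is to prove this via Gröbner basis elimination, exploiting the fact that binomial ideals admit Gröbner bases consisting of binomials.

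First, I would recall (or briefly verify) the classical fact that if $I'$ is a binomial ideal of $B$, then with respect to any monomial order on $B$, the reduced Gröbner basis of $I'$ consists of binomials. The verification is a Buchberger-algorithm argument: the $S$-polynomial of two binomials is itself a binomial (up to a scalar), and reducing a binomial modulo a family of binomials yields either zero or a binomial, since at each reduction step we cancel one monomial of the current binomial against the leading term of a binomial, producing a new binomial. Thus starting from binomial generators, Buchberger's algorithm only ever generates binomials, and the final reduced basis inherits this property.

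Next, I would pick an elimination order on $B$ that eliminates the $y$-variables. Concretely, take a monomial order $\succ$ on $B$ such that every monomial containing some $y_i$ is greater than every monomial in $K[t_1,\ldots,t_s]$ alone (for instance, a lexicographic product of the block orders). Let $\mathcal{G}$ be the reduced Gröbner basis of $I'$ with respect to $\succ$. By the previous paragraph $\mathcal{G}$ consists of binomials. By the standard elimination theorem (see, e.g., \cite{CLO}), the subset
\[
\mathcal{G}' \, := \, \mathcal{G} \cap K[t_1,\ldots,t_s]
\]
is a Gröbner basis of the contraction $I' \cap K[t_1,\ldots,t_s]$.

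Finally, every element of $\mathcal{G}'$ is in particular an element of $\mathcal{G}$, hence a binomial. Therefore $I' \cap K[t_1,\ldots,t_s]$ is generated by binomials, which is exactly the claim.

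The only non-routine ingredient is the stability of the binomial property under Buchberger's algorithm; once this is in hand, the elimination theorem finishes the argument essentially without further work. I expect this to be the main (mild) obstacle, and I would either invoke it as a known fact from the theory of binomial ideals or spell out the one-line $S$-polynomial computation that justifies it.
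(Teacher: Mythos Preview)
Your proposal is correct and follows essentially the same route as the paper: compute a Gr\"obner basis of $I'$ with respect to an elimination order for the $y$-variables, invoke the fact that Buchberger's algorithm preserves the binomial form, and apply the elimination theorem to conclude that $\mathcal{G}\cap K[t_1,\ldots,t_s]$ is a binomial Gr\"obner basis of the contraction. The paper uses the specific lex order $y_1\succ\cdots\succ y_n\succ t_1\succ\cdots\succ t_s$ and cites \cite{CLO} and \cite{Stur1} for the two ingredients, but the argument is otherwise identical to yours.
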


\begin{proof} Let $S=K[t_1,\ldots,t_s]$ and let $\mathcal{G}$ be a
Gr\"obner basis of   
$I'$ with respect to the lexicographic order $y_1\succ\cdots\succ y_n\succ
 t_1\succ\cdots\succ t_s$. By Buchberger algorithm
\cite[Theorem~2, p.~89]{CLO} the set $\mathcal{G}$ consists of binomials and 
by elimination theory \cite[Theorem~2, p.~114]{CLO} the set
$\mathcal{G}\cap S$ is a Gr\"obner 
basis of $I'\cap S$. Hence $I'\cap S$ is a binomial ideal. See the
proof of \cite[Corollary~4.4, p.~32]{Stur1} for additional details.
\end{proof}

\begin{theorem}\label{22-06-10}
 Let $B=K[ t_{1},\ldots ,t_{s},y_{1},\ldots
 ,y_{n}] $ be a polynomial ring over a finite field
 $K$ with $q$ elements. Then 
$$I\left( X^{\ast }\right)=
    \left(t_{1}-y^{v_{1}},\ldots
    ,t_{s}-y^{v_{s}},y_{1}^{q-1}-1,\ldots,y_{n}^{q-1}-1\right) \cap S
$$
and $I(X^*)$ is a binomial ideal. 
\end{theorem}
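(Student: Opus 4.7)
The plan is to prove the two claimed equalities in order: first the binomial description of $I(X^*)$, and then the fact that $I(X^*)$ itself is binomial.

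Denote by $I'$ the ideal of $B$ generated by $t_1-y^{v_1},\ldots,t_s-y^{v_s},y_1^{q-1}-1,\ldots,y_n^{q-1}-1$. For the inclusion $I'\cap S\subseteq I(X^*)$, I would take any $f\in I'\cap S$ and evaluate it at an arbitrary point $P=(x^{v_1},\ldots,x^{v_s})\in X^*$ with $x=(x_1,\ldots,x_n)\in(K^*)^n$. Since $f$ depends only on the $t_i$, the value $f(P)$ equals the value of $f$ in $B$ at $(t,y)=(P,x)$; but every generator of $I'$ vanishes at this point (Fermat's little theorem for the $y_j^{q-1}-1$, and tautologically for the $t_i-y^{v_i}$), so $f(P)=0$.

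For the reverse inclusion $I(X^*)\subseteq I'\cap S$, I would start with $f\in I(X^*)$ and first use the relations $t_i\equiv y^{v_i}\pmod{I'}$ to write $f(t_1,\ldots,t_s)-G(y_1,\ldots,y_n)\in(t_1-y^{v_1},\ldots,t_s-y^{v_s})\subseteq I'$, where $G(y):=f(y^{v_1},\ldots,y^{v_s})$. It then suffices to show $G\in I'$. The next step is to reduce $G$ modulo the binomials $y_j^{q-1}-1$, repeatedly replacing $y_j^{q-1}$ by $1$, to obtain $G'\in K[y_1,\ldots,y_n]$ with $\deg_{y_j}(G')<q-1$ for every $j$ and with $G-G'\in(y_1^{q-1}-1,\ldots,y_n^{q-1}-1)$. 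Since $y_j^{q-1}-1$ vanishes on $K^*$, the polynomial $G'$ agrees with $G$ on $(K^*)^n$; and $G$ vanishes on $(K^*)^n$ because $G(x)=f(x^{v_1},\ldots,x^{v_s})=f(P)=0$ for every $x\in(K^*)^n$. Hence $G'$ vanishes on $(K^*)^n$, and Lemma~\ref{may1-2-11} forces $G'=0$. Therefore $G\in I'$, so $f\in I'$, and combining with $f\in S$ gives $f\in I'\cap S$.

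Finally, since every generator of $I'$ is a binomial, $I'$ is a binomial ideal of $B$, so by Lemma~\ref{may1-1-11} the contraction $I'\cap S=I(X^*)$ is a binomial ideal of $S$.

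The only real obstacle is the passage from ``$f$ vanishes on $X^*$'' to ``$G'$ vanishes on $(K^*)^n$'' and then to $G'=0$; this is exactly what the Combinatorial Nullstellensatz (via Lemma~\ref{may1-2-11}) is designed for, once the degree bounds $\deg_{y_j}(G')<q-1$ have been arranged by reducing modulo the $y_j^{q-1}-1$. The rest of the argument is straightforward bookkeeping with the two sets of generators.
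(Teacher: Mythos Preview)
Your proof is correct and follows essentially the same strategy as the paper: both directions of the equality are argued the same way, with the key step being the reduction of $G=f(y^{v_1},\ldots,y^{v_s})$ modulo the $y_j^{q-1}-1$ to a polynomial $G'$ of low partial degrees that vanishes on $(K^*)^n$, followed by an appeal to Lemma~\ref{may1-2-11}, and the binomiality claim is deduced from Lemma~\ref{may1-1-11}. The only cosmetic difference is that the paper obtains $f-G\in(t_1-y^{v_1},\ldots,t_s-y^{v_s})$ via an explicit binomial-theorem expansion, whereas you invoke the congruence $t_i\equiv y^{v_i}$ directly; these are the same argument.
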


\begin{proof} We set $I'=\left(t_{1}-y^{v_{1}},\ldots
    ,t_{s}-y^{v_{s}},y_{1}^{q-1}-1,\ldots,y_{n}^{q-1}-1\right)\subset
    B$.  
First we show the inclusion $I(X^*)\subset I'\cap S$. Take a
polynomial $F=F\left( t_{1},\ldots ,t_{s}\right)$ that vanishes on
$X^*$. We can write  
\begin{equation}\label{may1-11}
F=\lambda _{1}t^{m_{1}}+\cdots +\lambda _{r}t^{m_{r}}\ \  \left(
\lambda_{i}\in K^{\ast};\ m_{i}\in \mathbb{N}^{s}\right).
\end{equation}
Write
$m_i=(m_{i1},\ldots,m_{is})$ for $1\leq i\leq r$. Applying the binomial
theorem to expand the right hand side of the equality
$$
t_j^{m_{ij}}=\left[(t_j-y^{v_j})+y^{v_j}\right]^{m_{ij}},\ \ \ 
1\leq i\leq r,\ 1\leq j\leq s,
$$
we get the equality
$$
t_j^{m_{ij}}=
\left(\sum_{k=0}^{m_{ij}-1}
\binom{m_{ij}}{k}\left(t_j-y^{v_j})^{m_{ij}-k}(y^{v_j})^k
\right)\right)+(y^{v_j})^{m_{ij}}.
$$
As a result, we obtain
that $t^{m_i}$ can be written as:
$$
t^{m_i}=t_1^{m_{i1}}\cdots
t_s^{m_{is}}=p_i+(y^{v_1})^{m_{i1}}\cdots(y^{v_s})^{m_{is}},
$$
where $p_i$ is a polynomial in the ideal
$(t_1-y^{v_1},\ldots,t_s-y^{v_s})$. Thus, substituting
$t^{m_1},\ldots,t^{m_r}$ in Eq.~(\ref{may1-11}), we obtain  
that $F$ can 
be written as:
\begin{equation}\label{23-jul-10}
F=\sum_{i=1}^sg_i(t_i-y^{v_i})+F(y^{v_1},\ldots,y^{v_s})
\end{equation}
for some $g_1,\ldots,g_s$ in $B$. By the division algorithm
in $K[y_1,\ldots,y_n]$ (see \cite[Theorem~3, p.~63]{CLO}) we can
write 
\begin{equation}\label{23-jul-10-1}
F(y^{v_1},\ldots,y^{v_s})=\sum_{i=1}^nh_i(y_i^{q-1}-1)+G(y_1,\ldots,y_n)
\end{equation}
for some $h_1,\ldots,h_n$ in $K[y_1,\ldots,y_n]$, 
where the monomials that occur in $G=G(y_1,\ldots,y_n)$ are not divisible by 
any of the monomials $y_1^{q-1},\ldots,y_n^{q-1}$, i.e.,
$\deg_{y_i}(G)<q-1$ for $i=1,\ldots,n$.
Therefore, using Eqs.~(\ref{23-jul-10}) and (\ref{23-jul-10-1}), we
obtain the equality
\begin{equation}\label{23-jul-10-2}
F=\sum_{i=1}^sg_i(t_i-y^{v_i})+\sum_{i=1}^nh_i(y_i^{q-1}-1)+
G(y_1,\ldots,y_n).
\end{equation}
Thus to show that $F\in I'\cap S$ we need only show that $G=0$. We
claim that  
$G$ vanishes on $(K^*)^n$. Take an arbitrary sequence $x_1,\ldots,x_n$
of elements of $K^*$. Making $t_i=x^{v_i}$ for all $i$ in
Eq.~(\ref{23-jul-10-2}) and using that
$F$ vanishes on $X^*$, we obtain
\begin{equation}\label{23-jul-10-3}
0=F(x^{v_1},\ldots,x^{v_s})=\sum_{i=1}^sg_i'(x^{v_i}-y^{v_i})+
\sum_{i=1}^nh_i(y_i^{q-1}-1)+
G(y_1,\ldots,y_n),
\end{equation}
where $g_i'=g_i(x^{v_1},\ldots,x^{v_s},y_1,\ldots,y_n)$. Since
$(K^*,\,\cdot\, )$ is a group of order $q-1$,  
we can then make $y_i=x_i$ for all $i$ in
Eq.~(\ref{23-jul-10-3}) to get that $G$
vanishes on $(x_1,\ldots,x_n)$. This completes the proof of the 
claim. Therefore $G$ vanishes on $(K^*)^n$ and $\deg_{y_i}(G)<q-1$ 
for all $i$. Hence $G=0$ by Lemma~\ref{may1-2-11}. 

Next we show the inclusion $I(X^*)\supset I'\cap S$. Take a
polynomial $f$ in $I'\cap S$. Then we can
write
\begin{equation}\label{sept1-09}
f=\sum_{i=1}^sg_i(t_i-y^{v_i})+\sum_{i=1}^nh_i(y_i^{q-1}-1)
\end{equation}
for some polynomials $g_1,\ldots,g_s, h_1,\ldots,h_n$ in $B$. Take a point 
 $P=(x^{v_1},\ldots,x^{v_s})$ in $X^*$. Making $t_i=x^{v_i}$ in
Eq.~(\ref{sept1-09}), we get 
$$
f(x^{v_1},\ldots,x^{v_s})=\sum_{i=1}^sg_i'(x^{v_i}-y^{v_i})+
\sum_{i=1}^nh_i'(y_i^{q-1}-1),
$$
where $g_i'=g_i(x^{v_1},\ldots,x^{v_s},y_1,\ldots,y_n)$ and 
$h_i'=h_i(x^{v_1},\ldots,x^{v_s},y_1,\ldots,y_n)$. 
Hence making $y_i=x_i$ for all $i$, we get that 
$f(P)=0$. Thus $f$ vanishes on $X^*$.
\end{proof}

In this paper we are always working over a finite field $K$. If
$K=\mathbb{C}$ is the field of complex numbers and $X$ is
an affine toric variety, i.e., 
$$X=V(\mathfrak{p})=\{P\in K^n\vert\,
f(P)=0 \mbox{ for all }f\in\mathfrak{p} \}$$ 
is the {\it zero set\/} of a toric
ideal $\mathfrak{p}$, then by the Nullstellensatz \cite[Theorem~1.6]{Eisen} we
have that $I(X)=\mathfrak{p}$. This means that $I(X)$ is a binomial ideal. For
infinite fields, we can use the Combinatorial Nullstellensatz (see
Theorem~\ref{comb-null}) to show the following description 
of $I(X^*)$. We refer to \cite{Stur1} for the theory of toric 
ideals. 

\begin{proposition}\label{infinite-field}
Let $B=K[t_1,\ldots,t_s,y_1,\ldots,y_n]$ be a polynomial ring
over an infinite field $K$. Then
$$
I(X^*)=(t_1-y^{v_1},\ldots,t_s-y^{v_s})\cap S
$$
and $I(X^*)$ is the toric ideal of $K[y^{v_1},\ldots,y^{v_s}]$. 
\end{proposition}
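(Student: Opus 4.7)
The plan is to mirror the structure of the proof of Theorem~\ref{22-06-10}, but exploit the infiniteness of $K$ to dispense with the generators $y_i^{q-1}-1$. Set $I'' = (t_1-y^{v_1},\ldots,t_s-y^{v_s}) \subset B$. I will establish the two inclusions $I(X^*) \subseteq I'' \cap S$ and $I'' \cap S \subseteq I(X^*)$, and then identify $I'' \cap S$ with the toric ideal of $K[y^{v_1},\ldots,y^{v_s}]$.

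For the inclusion $I(X^*) \subseteq I'' \cap S$, take $F = \sum_{i=1}^r \lambda_i t^{m_i}$ vanishing on $X^*$. As in the proof of Theorem~\ref{22-06-10}, I would expand each $t_j^{m_{ij}} = ((t_j - y^{v_j}) + y^{v_j})^{m_{ij}}$ with the binomial theorem to obtain
\begin{equation*}
F = \sum_{i=1}^s g_i(t_i - y^{v_i}) + F(y^{v_1},\ldots,y^{v_s})
\end{equation*}
for some $g_i \in B$. Setting $G(y_1,\ldots,y_n) := F(y^{v_1},\ldots,y^{v_s})$, the point is to show $G = 0$ as a polynomial. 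Substituting $t_i = x^{v_i}$ and using that $F$ vanishes on $X^*$ shows that $G$ vanishes on $(K^*)^n$. Here is where the infiniteness enters: applying Theorem~\ref{comb-null} with $S_1=\cdots=S_n = K^*$, which are all infinite, no bound on the degrees $a_i$ of any monomial $y^a$ of top degree in $G$ is required, so if $G \neq 0$ we immediately obtain a contradiction. Hence $G=0$ and $F \in I'' \cap S$.

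The reverse inclusion $I'' \cap S \subseteq I(X^*)$ is routine: any $f \in I'' \cap S$ can be written as $\sum g_i(t_i - y^{v_i})$, and evaluating at a generic point $P = (x^{v_1},\ldots,x^{v_s}) \in X^*$ (with $t_i = x^{v_i}$, $y_j = x_j$) yields $f(P) = 0$.

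For the toric identification, recall that the toric ideal of $K[y^{v_1},\ldots,y^{v_s}]$ is by definition the kernel $P$ of the $K$-algebra homomorphism $\varphi \colon S \to K[y_1^{\pm 1},\ldots,y_n^{\pm 1}]$ sending $t_i \mapsto y^{v_i}$. The standard elimination argument (see \cite[Chapter~4]{Stur1}) gives $P = I'' \cap S$: a polynomial $f \in S$ lies in $P$ iff $f(y^{v_1},\ldots,y^{v_s}) = 0$, and the reduction modulo $(t_i - y^{v_i})$ used above realizes exactly this substitution. Combined with the two inclusions, this yields $I(X^*) = I'' \cap S = P$, as claimed. The main obstacle, and only substantive step beyond the finite-field case, is verifying that a polynomial $G \in K[y_1,\ldots,y_n]$ vanishing on $(K^*)^n$ must be identically zero over an infinite field; the Combinatorial Nullstellensatz handles this cleanly because $|K^*| = \infty$ strictly exceeds every finite exponent.
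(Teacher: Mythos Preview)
Your proposal is correct and follows exactly the approach the paper indicates: the paper does not spell out a proof of this proposition, but merely remarks that one can use the Combinatorial Nullstellensatz (Theorem~\ref{comb-null}) and refers to \cite{Stur1} for the toric ideal identification. Your argument adapts the proof of Theorem~\ref{22-06-10} precisely as intended---dropping the generators $y_i^{q-1}-1$ and replacing Lemma~\ref{may1-2-11} with the observation that $|K^*|=\infty$ makes the hypothesis $|S_i|>a_i$ of Theorem~\ref{comb-null} automatic---and the elimination description of the toric ideal is standard.
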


Our next aim is to show how to compute $I(Y)$. For $f\in S$ of degree $e$ define 
$$
f^h=u^ef\left({t_1}/{u},\ldots,{t_s}/{u}\right),
$$
that is,  $f^h$ is the {\it homogenization\/} of the polynomial 
$f$ with respect to $u$. The {\it homogenization\/} 
of $I(X^*)\subset S$ is the ideal $I(X^*)^h$ of $S[u]$ 
given by 
$$
I(X^*)^h=(\{f^h|\, f\in I(X^*)\}).
$$ 

Let $\succ$ be the {\it elimination order\/} on the 
monomials of $S[u]$ with respect to 
$t_1,\ldots,t_s,t_{s+1}$, where $u=t_{s+1}$. Recall that 
this order is defined as  $t^b\succ t^a$
if and only if the total degree of $t^b$
in  the variables $t_1,\ldots,t_{s+1}$ is greater than that of $t^a$, or
both degrees are equal, and the last nonzero component 
of $b-a$ is negative. 

\begin{definition}\label{projective-closure-def} The 
{\it projective closure\/} of $X^*$, denoted by $\overline{X^*}$, is
given by $\overline{X^*}:=\overline{Y}$, where 
$\overline{Y}$ is the closure of $Y$ in the Zariski topology of $\mathbb{P}^s$.
\end{definition}

\begin{lemma}\label{elim-ord-hhomog}
If $f_1,\ldots,f_r$ is a Gr\"obner basis of $I(X^*)$, then
$f_1^h,\ldots,f_r^h$ form a Gr\"obner basis and the following 
equalities hold{\rm:} 
$$
I(Y)=I(X^*)^h=(f_1^h,\ldots,f_r^h).
$$
\end{lemma}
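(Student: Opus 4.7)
My plan is to split the proof into two independent pieces: the identification $I(Y)=I(X^*)^h$, which is the standard projective-closure statement, and the Gr\"obner-basis assertion for $\{f_1^h,\ldots,f_r^h\}$, from which the equality $I(X^*)^h=(f_1^h,\ldots,f_r^h)$ drops out for free. For the first piece, since $K$ is finite, $Y$ is a finite set of projective points whose last coordinate is $1$, so $Y$ is already Zariski closed and coincides with $\overline{X^*}$. The inclusion $I(X^*)^h\subseteq I(Y)$ is direct: for $f\in I(X^*)$ and $[(P,1)]\in Y$ we have $f^h(P,1)=f(P)=0$. For the reverse inclusion, any homogeneous $F\in I(Y)$ factors as $F=u^{k_0}G$ with $u\nmid G$; dehomogenizing gives $g:=G(t,1)\in I(X^*)$ with $\deg g=\deg G$, whence $g^h=G$ and $F=u^{k_0}g^h\in I(X^*)^h$.

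The key preparatory observation for the Gr\"obner-basis part is that $\succ$ is degree-refining on $S[u]$, and its tiebreaking rule---``last nonzero component of $b-a$ negative'' with $u=t_{s+1}$ being the last variable---makes every monomial of a fixed total degree that avoids $u$ strictly larger than every monomial of the same degree that contains $u$: if $b_{s+1}=0$ and $a_{s+1}>0$ with $|a|=|b|$, then $b-a$ has negative last component, so $t^b\succ t^a$. Since $f^h$ is obtained from $f$ by padding each term up to the top degree with powers of $u$, all terms of $f^h$ share the same total degree, and the ones with $u$-exponent $0$ are precisely the top-degree part of $f$. Consequently, for every $f\in S$,
\[
{\rm in}_{\succ}(f^h)={\rm in}_{\succ}(f).
\]

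With this identity in hand the Gr\"obner-basis statement follows at once. Given any nonzero homogeneous $F\in I(X^*)^h$, write $F=u^{k_0}G$ with $u\nmid G$; as in the first paragraph $g:=G(t,1)\in I(X^*)$ and $G=g^h$. Because $\{f_1,\ldots,f_r\}$ is a Gr\"obner basis of $I(X^*)$ with respect to the restriction of $\succ$ to $S$, some ${\rm in}_{\succ}(f_i)$ divides ${\rm in}_{\succ}(g)$; combining with the leading-term identity, ${\rm in}_{\succ}(f_i^h)={\rm in}_{\succ}(f_i)$ divides ${\rm in}_{\succ}(g)={\rm in}_{\succ}(G)$, hence also $u^{k_0}{\rm in}_{\succ}(G)={\rm in}_{\succ}(F)$. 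Thus the leading ideal of $I(X^*)^h$ is generated by the leading terms of $f_1^h,\ldots,f_r^h$, which simultaneously proves that these form a Gr\"obner basis and that $(f_1^h,\ldots,f_r^h)=I(X^*)^h=I(Y)$. The one subtle point is the leading-term identity ${\rm in}_{\succ}(f^h)={\rm in}_{\succ}(f)$; once that is extracted carefully from the definition of $\succ$, every remaining step is mechanical.
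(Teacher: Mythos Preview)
Your argument is correct. The paper's own proof is essentially a two-line citation: it observes that $\overline{X^*}=\overline{Y}=Y$ because $K$ is finite, and then invokes Propositions~2.4.26 and~2.4.30 of Villarreal's \emph{Monomial Algebras} for, respectively, the equality $I(\overline{X^*})=I(X^*)^h$ and the fact that homogenizing a Gr\"obner basis with respect to a graded order yields a Gr\"obner basis of the homogenized ideal. What you have done is unpack those two cited results in this concrete setting, so the route is the same in spirit but self-contained. Your leading-term identity ${\rm in}_\succ(f^h)={\rm in}_\succ(f)$---valid because $\succ$ is the graded reverse lexicographic order with $u$ smallest, so that within a fixed degree any monomial free of $u$ beats any monomial containing $u$---is precisely the mechanism behind Villarreal's Proposition~2.4.30, and your direct verification of $I(Y)=I(X^*)^h$ via the factorization $F=u^{k_0}G$ is exactly the content of Proposition~2.4.26 specialized to a finite (hence Zariski-closed) $Y$.
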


\begin{proof} In our situation $\overline{X^*}=\overline{Y}=Y$ because $K$ is a
finite field. Thus, the result follows readily from
\cite[Propositions~2.4.26 and 2.4.30]{monalg}.  
\end{proof}

\begin{corollary}\label{may4-11} The dimension and the length of $C_{X^*}(d)$ can be
computed using Gr\"obner basis.  
\end{corollary}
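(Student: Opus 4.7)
The plan is to chain together the results already established in the paper so that every quantity involved becomes effectively computable from a Gröbner basis. By Theorem~\ref{bridge-affine-projective}(c), it suffices to exhibit Gröbner-basis algorithms for $H_Y(d)$ and $\deg(S[u]/I(Y))$, and for both of these it is enough to produce a Gröbner basis of the homogeneous ideal $I(Y)\subset S[u]$.

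First I would compute $I(X^*)$ as follows. Form the ideal
$$I'=\bigl(t_1-y^{v_1},\ldots,t_s-y^{v_s},\, y_1^{q-1}-1,\ldots,y_n^{q-1}-1\bigr)\subset B=K[t_1,\ldots,t_s,y_1,\ldots,y_n],$$
fix a lexicographic (or any elimination) order with $y_1\succ\cdots\succ y_n\succ t_1\succ\cdots\succ t_s$, and run Buchberger's algorithm to obtain a Gröbner basis $\mathcal{G}$ of $I'$. By elimination theory \cite[Theorem~2, p.~114]{CLO}, $\mathcal{G}\cap S$ is a Gröbner basis of $I'\cap S$, and by Theorem~\ref{22-06-10} this intersection equals $I(X^*)$. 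Next I would apply Lemma~\ref{elim-ord-hhomog}: homogenizing each element of $\mathcal{G}\cap S$ with respect to the new variable $u$ yields a Gröbner basis of $I(Y)$ under the elimination order on $S[u]$ described before Definition~\ref{projective-closure-def}.

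Once a Gröbner basis of $I(Y)$ is in hand, the initial ideal ${\rm in}_\succ(I(Y))$ is determined by its leading monomials, and the Macaulay theorem gives $H_Y(d)=\dim_K S[u]_d/{\rm in}_\succ(I(Y))_d$, which is a finite combinatorial count. Hence $\dim_K C_{X^*}(d)=H_Y(d)$ is computable. Because $S[u]/I(Y)$ has Krull dimension $1$ (see the proof of Theorem~\ref{bridge-affine-projective}(c)), its Hilbert polynomial is a constant equal to $\deg(S[u]/I(Y))$, and Proposition~\ref{harris-geramita} guarantees that $H_Y(d)$ stabilizes at this constant for $d\gg 0$; therefore $|X^*|=\deg(S[u]/I(Y))$ can be extracted simply by evaluating $H_Y$ at any sufficiently large $d$ (or directly by a standard degree routine on the Gröbner basis).

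The main obstacle is purely computational rather than conceptual: the intermediate Gröbner basis of $I'\subset B$ can be expensive for large $s$, $n$, or $q$ because of the $y_i^{q-1}-1$ generators, and the elimination step is the bottleneck. None of the remaining steps introduces any new mathematical content beyond the results already proved in Theorems~\ref{bridge-affine-projective} and \ref{22-06-10} and Lemma~\ref{elim-ord-hhomog}, so the corollary follows, with the explicit implementation being carried out in Procedure~\ref{mac-proc-affine-codes}.
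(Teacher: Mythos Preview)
Your proposal is correct and follows essentially the same route as the paper's own proof: reduce to computing $H_Y(d)$ and $\deg(S[u]/I(Y))$ via Theorem~\ref{bridge-affine-projective}(c), obtain a Gr\"obner basis of $I(Y)$ through Theorem~\ref{22-06-10} and Lemma~\ref{elim-ord-hhomog}, and then read off the Hilbert function and degree. You simply spell out in more detail the elimination step for $I(X^*)$ and the passage from a Gr\"obner basis of $I(Y)$ to its Hilbert function and degree, whereas the paper delegates those to a one-line appeal to \emph{Macaulay}$2$.
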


\begin{proof} By Lemma~\ref{elim-ord-hhomog} we can find a generating
set of $I(Y)$ using Gr\"obner basis. Thus, using the computer algebra
system {\em Macaulay\/}$2$ \cite{computations-macaula2,mac2}, we can compute the Hilbert
function and the degree of $S[u]/I(Y)$, i.e., we can compute the  
dimension and the length of $C_Y(d)$. Consequently, 
Theorem~\ref{bridge-affine-projective} allows to compute the dimension
and the length of $C_{X^*}(d)$ using Gr\"obner basis. 
\end{proof}

Putting the results of this section together we obtain the following
procedure.

\begin{procedure}\label{mac-proc-affine-codes} 
The following simple procedure for {\it
Macaulay\/}$2$ computes the dimension and the length of a
parameterized affine code $C_{X^*}(d)$ of degree $d$. 
\begin{verbatim}
R=GF(q)[y1,...,yn,t1,...,ts,u,MonomialOrder=>Eliminate n]
I'=ideal(t1-y1^{v_1},...,t_s-y^{s},y1^{q-1}-1,...,yn^{q-1}-1)
I(X^*)=ideal selectInSubring(1,gens gb I')
I(Y)'=homogenize(I(X^*),u)
S=GF(q)[t1,...,ts,u]
I(Y)=substitute(I(Y)',S)
degree I(Y)
hilbertFunction(d,I(Y))
\end{verbatim}
\end{procedure}

\begin{example}\label{k3-affine-parameters} Let $X^*$ be the affine algebraic toric set parameterized by
$y_1y_2,y_2y_3,y_1y_3$ and let $C_{X^*}(d)$ be its parameterized
affine code of order $d$ over the field
$K=\mathbb{F}_5$. Using {\em Macaulay\/}$2$, together with
Procedure~\ref{mac-proc-affine-codes}, we obtain:
\begin{eqnarray*}
I(X^*)&=&(t_3^4-1,t_2^2t_3^2-t_1^2,t_1^2t_3^2-t_2^2,t_2^4-1,t_1^2t_2^2-t_3^2,t_1^4-1),\\ 
I(Y)&=&(t_3^4-t_4^4,t_2^2t_3^2-t_1^2t_4^2,t_1^2t_3^2-t_2^2t_4^2,t_2^4-t_4^4,t_1^2t_2
      ^2-t_3^2t_4^2,t_1^4-t_4^4),
\end{eqnarray*}
\begin{eqnarray*}
&&\left.
\begin{array}{c|c|c|c|c|c}
 d & 1 & 2 & 3 &4 &5\\ \hline
|X^*|& 32 & 32 & 32&32&32 \\ \hline
\dim C_{X^*}(d)& 4 & 10 & 20&29&32  \\ \hline
\delta_{X^*}(d) & 23 &8 &  & & 1
\end{array}\right.
\end{eqnarray*}
The minimum distance was also computed with {\it Macaulay\/}$2$. 
\end{example}

\medskip

\begin{center}
ACKNOWLEDGMENT
\end{center}

\noindent We thank the referee for the careful reading of the paper
and for the improvements that he/she suggested.

\bibliographystyle{plain}

\end{document}